\documentclass[reqno,oneside,12pt]{amsart}

%
%
%
%
%
\usepackage[T1]{fontenc}
\usepackage{times,mathptm}
\usepackage{amssymb,epsfig,verbatim,xypic}

%
%
%
\theoremstyle{plain}

\newtheorem{thm}{Theorem}[section]

\newtheorem{pro}[thm]{Proposition}
\newtheorem{lem}[thm]{Lemma}
\newtheorem{proposition-principale}[thm]{Proposition principale}
\newtheorem{thm-principal}{Th\'eor\`eme principal}[section]

\theoremstyle{definition}

\newtheorem{rem}[thm]{Remark}

\newenvironment{thm-A}
{\noindent{\bf Theorem A (Urech).--}\it}{\\}

\newenvironment{thm-M}
{\noindent{\bf Main Theorem.}\it}{\\}

\newenvironment{thm-AA}
{\noindent{\bf Theorem A'.}\it}{\\}

\newenvironment{thm-B}
{\noindent{\bf Theorem B.--}\it}{\\}

\newenvironment{thm-C}
{\noindent{\bf Theorem C.--}\it}{\\}

\newenvironment{thm-BP}
{\noindent{\bf Bell-Poonen Theorem.--}\it}{\\}

\newenvironment{thm-BB}
{\noindent{\bf Theorem B'.}\it}


\def\C{\mathbf{C}}
\def\bfk{\mathbf{k}}

\def\bfF{{\mathbf{F}}}

\def\R{\mathbf{R}}
\def\Q{\mathbf{Q}}

\def\Z{\mathbf{Z}}
\def\N{\mathbf{N}}


\def\Id{{\mathsf{Id}}}
\def\bfx{{\mathbf{x}}}
\def\bfa{{\mathbf{a}}}
\def\bfb{{\mathbf{b}}}
\def\bfc{{\mathbf{c}}}
\def\bft{{\mathbf{t}}}

\def\Gr{{\mathcal{G}}}
\def\Lin{{\text{Lin}}}
\def\Diff{{\sf{Diff}}}

\def\ux{{\mathbf{x}}}
\def\uy{{\mathbf{y}}}

\def\bbP{\mathbb{P}}
\def\bbA{\mathbb{A}}


\def\Aut{{\sf{Aut}}}
\def\Bir{{\sf{Bir}}}


\def\GL{{\sf{GL}}\,}

\def\U{{\sf{U}}\,}
\def\V{{\sf{V}}\,}

\def\End{{\sf{End}}\,}

\def\Exc{{\text{Exc}}}

%
%

\setlength{\textwidth}{13.2cm}                       
\setlength{\textheight}{20.05cm}                     
\setlength{\topmargin}{0.20cm}                     
\setlength{\headheight}{0.75cm}                     
\setlength{\headsep}{0.6cm}                         
\setlength{\oddsidemargin}{1.6cm}                
\setlength{\evensidemargin}{1.6cm}              

%
\addtocounter{section}{0}             
\numberwithin{equation}{section}       

\begin{document}

\setlength{\baselineskip}{0.54cm}        
%
%
\title[Degree sequences]
{On degrees of birational mappings}
\date{2018}
\author{Serge Cantat and Junyi Xie}
\address{Univ Rennes, CNRS, IRMAR - UMR 6625, F-35000 Rennes, France}
\email{serge.cantat@univ-rennes1.fr, junyi.xie@univ-rennes1.fr}
 
%
%

%
%

%
%

\begin{abstract} 

We prove that the degrees of the iterates $\deg(f^n)$ of a birational 
map satisfy $\liminf(\deg(f^n))<+\infty$ if and only if the sequence $\deg(f^n)$
is bounded, and that the growth of $\deg(f^n)$ cannot be arbitrarily slow, unless $\deg(f^n)$ is bounded. 
\end{abstract}

\maketitle

\section{Degree sequences}\label{par:Intro}

Let $\bfk$ be a field. 
Consider a projective variety $X$, a polarization $H$ of $X$ (given by hyperplane sections 
of $X$ in some embedding $X\subset \bbP^N$), and a birational transformation $f$ of $X$, 
all defined over the field $\bfk$. Let $k$ be
the dimension of~$X$. The {\bf{degree}} of $f$ with respect to the polarization $H$ is the integer
\begin{equation}
\deg_H(f)=(f^*H)\cdot H^{k-1}
\end{equation}
where $f^*H$ is the total transform of $H$, and $(f^*H)\cdot H^{k-1}$ is the intersection product of $f^*H$
with $k-1$ copies of $H$. The degree is a positive integer, which we shall simply denote by $\deg(f)$, even
if it depends on $H$. When $f$ is a birational transformation of the projective space $\bbP^k$ and the polarization 
is given by ${\mathcal{O}}_{\bbP^k}(1)$ (i.e. by hyperplanes $H\subset \bbP^k$), then $\deg(f)$ is  the degree of the
homogeneous polynomial formulas defining $f$ in homogeneous coordinates. 

The degrees are submultiplicative, in the following sense:  
\begin{equation}\label{eq:sub-multi}
\deg(f\circ g)\leq c_{X,H}  \deg(f) \deg(g)
\end{equation}
for some positive constant $c_{X,H}$ and for every pair of birational transformations. 
Also, if the polarization $H$ is changed into another polarization $H'$, there is a positive constant $c$ which depends on $X$, $H$ and $H'$ but not on $f$, such 
that 
\begin{equation}\label{eq:change-of-polarization}
\deg_{H}(f)\leq c\deg_{H'}(f)
\end{equation}  
We refer to \cite{Dinh-Sibony:2005, NguyenBD:2017, TTTruong} for these fundamental properties.

The {\bf{degree sequence}} of $f$ is the sequence $(\deg(f^n))_{n\geq 0}$; it plays an important role in 
the study of the dynamics and the geometry of $f$. There are infinitely, but only countably many degree sequences 
(see~\cite{Bonifant2000,Urech}); unfortunately, not much is known on these sequences when $\dim(X)\geq 3$ (see~\cite{Blanc-Cantat, Diller-Favre} for $\dim(X)=2$).  In this article, we obtain the following basic results.
\begin{itemize}
\item The sequence $(\deg(f^n))_{n\geq 0}$ is bounded if and only if it is bounded along an infinite subsequence (see Theorems A and B in \S~\ref{par:Auto} and \S~\ref{par:BirationalTransformations}).
\item If the sequence $(\deg(f^n))_{n\geq 0}$ is unbounded, then its growth can not be arbitrarily slow; for instance, $\max_{0\leq j\leq n}\deg(f^j)$
is asymptotically bounded from below by the inverse of the diagonal Ackermann function when $X=\bbP^k_\bfk$ (see Theorem C in \S~\ref{par:Ackermann} for a better result).
 \end{itemize}
We focus on birational transformations because 
 a rational dominant transformation which is not birational has a topological degree $\delta > 1$, and this forces an exponential 
growth of the degrees: 
$1 < \delta^{1/k} \leq \lim_n( \deg(f^n)^{1/n})
$
where $k=\dim(X)$ (see~\cite{Dinh-Sibony:2005} and \cite{Pano}, pages 120--126).

%
%

\section{Automorphisms of the affine space}\label{par:Auto}

%
%

We start with the simpler case of automorphisms of the affine space; the goal of this section is to introduce
a $p$-adic method to study degree sequences.\\

\begin{thm-A}  
Let $f$ be an automorphism of the affine space $\bbA^k_\bfk$. If 
$\deg(f^n)$ is bounded along an infinite subsequence, then it is 
bounded.
\end{thm-A}

\vspace{-0.5cm}



\subsection{Urech's proof} In~\cite{Urech}, Urech proves a stronger result. Writing his proof in an intrinsic way, we extend it to  affine varieties:

\begin{thm}\label{thm:affbound}
Let $X=Spec\, A$ be an irreducible affine variety of dimension $k$ over the field $\bfk$. 
Let $f:X \to X$ be an automorphism. If  $(\deg(f^n))$ is unbounded there exists $\alpha>0$ such that 
$\#\{n\geq 0|\,\, \deg(f^n)\leq d\}\leq \alpha d^k$; in particular,  $\max_{0\leq j\leq n} \deg(f^j)$ is bounded from below by 
$(n/\alpha)^{1/k}$.
\end{thm}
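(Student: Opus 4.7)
The plan is to bound $|S_d| := \#\{n \geq 0 : \deg(f^n) \leq d\}$ by a polynomial in $d$ of degree $k$, by comparing it to $\dim_{\bfk} A_{\leq d}$ for a suitable filtration of the coordinate ring $A$. I would start by fixing a presentation of $A$ by generators, giving an embedding $X \hookrightarrow \bbA^N_{\bfk}$, and defining $A_{\leq d} \subset A$ as the image of the polynomials of degree $\leq d$. By the Hilbert--Serre theorem applied to the induced graded algebra, $\dim_{\bfk} A_{\leq d} \leq C_1 d^k$ for some constant $C_1>0$ depending only on the embedding. Setting $V := A_{\leq 1}$, which generates $A$ as an algebra and contains the pullbacks of the coordinates, the degree is characterized by
\begin{equation*}
\deg(f^n) \leq d \iff W_n := (f^n)^{*}(V) \subset A_{\leq d}.
\end{equation*}

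The first substantive step is to show that, under the unboundedness hypothesis, the subspaces $(W_n)_{n \in \Z}$ are pairwise distinct. Indeed, if $W_n = W_m$ with $n < m$, then pulling back by $(f^{-n})^{*}$ gives $(f^{m-n})^{*} V = V$; iterating, $(f^{j(m-n)})^{*} V = V$ for every $j \geq 0$, so $\deg(f^{j(m-n)}) \leq 1$ for all $j$. Combined with submultiplicativity~\eqref{eq:sub-multi}, this would force $(\deg(f^n))$ to be bounded, a contradiction.

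The heart of the argument is then to prove a linear lower bound of the form
\begin{equation*}
\dim_{\bfk} \sum_{n \in S_d} W_n \;\geq\; c \, |S_d|
\end{equation*}
for some constant $c > 0$ depending only on $X$, $H$, and $f$. Granted this, the inclusion $\sum_{n \in S_d} W_n \subset A_{\leq d}$ together with $\dim A_{\leq d} \leq C_1 d^k$ yields $|S_d| \leq (C_1/c) d^k$, which gives the theorem with $\alpha := C_1/c$. To establish the lower bound I would argue inductively on the elements $n_1 < n_2 < \cdots$ of $S_d$: an inclusion $W_{n_{i+1}} \subset \sum_{j \leq i} W_{n_j}$ translates, after pullback by $(f^{-n_{i+1}})^{*}$, into $V \subset \sum_{j \leq i} W_{n_j - n_{i+1}}$, i.e.\ a linear dependence of $V$ on a finite part of the $f^{*}$-orbit; combined with the fact that $V$ generates $A$ (so the dependence propagates to higher graded pieces by taking products), this would force a nontrivial periodicity of the orbit of $V$ under $f^{*}$, and hence, as in the previous paragraph, the boundedness of $(\deg(f^n))$. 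Making this inductive step uniform --- with a constant $c$ independent of $d$ --- is the main obstacle, since one must carefully exploit both the multiplicative structure on $A$ and the injectivity of $f^{*}$.

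Finally, the second claim of the theorem is an immediate consequence of the first: setting $M := \max_{0 \leq j \leq n} \deg(f^j)$, one has $\{0, 1, \dots, n\} \subset S_M$, so $n + 1 \leq |S_M| \leq \alpha M^k$, and therefore $M \geq (n/\alpha)^{1/k}$.
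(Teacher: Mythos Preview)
Your setup is sound and matches the paper's: choose a filtration $A_{\leq d}$ with $\dim A_{\leq d}\leq C_1 d^k$, and set $V=A_{\leq 1}$, $W_n=(f^n)^*V$. Your proof that the $W_n$ are pairwise distinct when $(\deg(f^n))$ is unbounded is correct, and your deduction of the second assertion from the first is fine.

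The gap is exactly where you flag it: the ``heart of the argument'', namely the inequality $\dim\sum_{n\in S_d}W_n\geq c\,|S_d|$. Your sketched inductive step would need that
\[
W_{n_{i+1}}\subset\sum_{j\leq i}W_{n_j}\quad\Longrightarrow\quad (\deg(f^n))\text{ is bounded},
\]
but the hypothesis here is only an inclusion of \emph{images}: for each $v\in V$ there exist $v_j\in V$ (depending on $v$) with $(f^{n_{i+1}})^*v=\sum_j (f^{n_j})^* v_j$. After pulling back you get $V\subset\sum_j (f^{-p_j})^*V$ for some positive integers $p_j$, and this does not propagate: applying further powers of $f^*$ only gives $V\subset\sum_m (f^{-m})^*V$ for larger and larger sets of $m$, with no control on the degrees of the right-hand side. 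There is no mechanism producing the periodicity you invoke, because the $v_j$'s vary with $v$.

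The paper's fix is simple and decisive: instead of the images $W_n$, look at the \emph{linear maps} $\phi_n:=(f^*)^n|_{V}\in\mathrm{Hom}_\bfk(V,A_{\leq d})$. This space has dimension $(\dim V)\cdot\dim A_{\leq d}\leq (\dim V)\,C_1 d^k$, so if $|S_d|$ exceeds this bound the $\phi_{n_i}$ are linearly dependent: $\sum_i a_i\,(f^*)^{n_i}|_V=0$ with some $a_i\neq 0$. Now the crucial point is that one may left-compose with $(f^*)^s$ for every $s\geq 0$, obtaining $\sum_i a_i\,(f^*)^{n_i+s}|_V=0$; hence the sequence $\bigl((f^*)^N|_V\bigr)_{N\geq 0}$ in $\mathrm{Hom}_\bfk(V,A)$ satisfies a linear recurrence with constant coefficients. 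Its span is therefore finite-dimensional, contained in some $\mathrm{Hom}_\bfk(V,A_{\leq B})$, and $\deg(f^N)\leq B$ for all $N$. This gives the theorem with $\alpha=C_1\dim V$. In short: replace ``images of $V$'' by ``restrictions to $V$'', and the obstacle disappears.
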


Here, the degree of $f^n$, depends on the choice of a projective compactification $Y$ of $X$ and an ample line bundle $L$ on $Y$. 
However, by Equation~\eqref{eq:change-of-polarization}, the statement of Theorem \ref{thm:affbound} does not depend on the choice of $(Y,L)$.
Since automorphisms of $X$ always lift to its normalization, we may assume that $X$ is normal.
To prove this theorem, we shall introduce another equivalent notion of degree. 

\subsubsection{Degrees on affine varieties}\label{par:degrees-affine} Consider $X$ as a subvariety $X\subseteq \bbA^N\subseteq \bbP^N.$  Let $\bar{X}$ be the Zariski closure of $X$ in $\bbP^N$ and $H_1:=\bbP^N\setminus \bbA^N$ be the hyperplane at infinity.
Let $\pi:Y\to \bar{X}$ be its normalization: 
$Y$ is a normal projective compactification of $X$. Since $\pi:Y\to \bar{X}$ is finite, there exists $m\geq 1$ such (i) $H:=\pi^*(mH_1|_{\bar{X}})$ is very ample on $Y$ and (ii) $H$ is projectively normal on $Y$ i.e.  for every $n\geq 0$, the morphism 
$(H^0(Y,H))^{\otimes n}\to H^0(Y,nH)$ is surjective.

If $P\in A$ is a regular function on $X$, we extend it as a rational function on $Y$, we denote by $(P)=(P)_0-(P)_\infty$ the divisor defined by $P$ on $Y$, and  we define 
\begin{eqnarray}
\Delta(P) & = & \min\{d\geq 0|\,\,(P)+dH\geq 0 \text{ on } Y\}, \\
A_d & = & \{P\in A|\,\, \Delta(P)\leq d\}, \quad (\forall d\geq 0).
\end{eqnarray}
Then $A=\cup_{d\geq 0}A_d.$ Since $Y\setminus X$ is the support of $H$,
we get an isomorphism 
$i_n:H^0(Y,nH)\to A_n\subseteq A$ for every $n\geq 0$.
Thus, $A_1$ generates $A$ and the morphism $A_1^{\otimes n}\to A_n$ is surjective.
Now we define 
\begin{equation}
\deg^H(f)=\min\{m\geq 0|\,\, \Delta({f^*P})\leq m \text{ for every } P\in A_1\}.
\end{equation}
For every $P\in A_n$, we can write $P=\sum_{i=1}^l g_{1,i}\dots g_{1,n}$ for some $g_{i,j}\in A_1$.
We get $f^*P=\sum_{i=1}^l f^*g_{1,i}\dots f^*g_{1,n}\in A_{\deg^H(f)n}$ and
\begin{equation}
\Delta(f^*P)\leq \deg^H(f)\Delta(P).
\end{equation}
Since $A$ is generated by $A_1$, we get an embedding 
\begin{equation}
\End(A)\subseteq {\rm Hom}_{\bfk}(A_1,A)=\cup_{d\geq 1}{\rm Hom}_{\bfk}(A_1,A_d).
\end{equation}
Set $\End(A)_d=\End(A)\cap {\rm Hom}_{\bfk}(A_1,A_d)$. For any automorphism $f\colon X\to X$, $\deg^H(f)\leq d$ if and only if $f\in \End(A)_d$.
By Riemann-Roch theorem, there exists $\gamma>0$ such that 
$\dim A_n\leq \gamma n^k$, and this gives the upper bound 
\begin{equation}\label{eq:RR}
\dim \End(A)_d\leq  {\rm Hom}_{\bfk}(A_1,A_d)\leq (\gamma d^k)\dim A_1.
\end{equation}

The following proposition, 
proved in the Appendix, shows that this new degree $\deg^H(f)$ is  equivalent to the degree $\deg_H(f)$ introduced in Section~\ref{par:Intro}.

\begin{pro}\label{pro:eq-degrees} For every automorphism $f\in \Aut(X)$ we have
\[
\frac{1}{k}\deg^H(f)\leq \frac{1}{(H^k)}\deg_H(f) \leq \deg^H(f).
\]
\end{pro}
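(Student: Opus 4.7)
The plan is to reduce both degrees to numerical invariants attached to the boundary $Y\setminus X$. First, I would show that $f^*H$, as a Weil divisor on $Y$, is supported on $Y\setminus X=\text{supp}(H)$. Resolve the indeterminacy of $f$ by $\pi:\tilde Y\to Y$ so that $\tilde f:=f\circ\pi$ is a morphism; any non-$\pi$-exceptional prime divisor $\tilde E\subset\tilde Y$ with $\tilde f(\tilde E)\subseteq\text{supp}(H)$ must satisfy $\pi(\tilde E)\subseteq Y\setminus X$, since otherwise $f$ would send a divisor inside $X$ into $Y\setminus X$, contradicting $f\in\Aut(X)$. Writing $H=\sum_i a_iE_i$ and $f^*H=\pi_*\tilde f^*H=\sum_i c_iE_i$ over the boundary components, and setting $\delta_i:=E_i\cdot H^{k-1}$, we have $(H^k)=\sum_i a_i\delta_i$ and $\deg_H(f)=\sum_i c_i\delta_i$.

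Next, I would establish that $\deg^H(f)=\max_i\lceil c_i/a_i\rceil$, equivalently the smallest integer $d$ for which $dH-f^*H$ is effective. For $s\in A_1$ corresponding via $i_1$ to $\sigma\in H^0(Y,H)$, the identity $(f^*s)+dH=\pi_*\tilde f^*\mathrm{div}(\sigma)+(dH-f^*H)$ shows that if $dH\geq f^*H$ then $f^*s\in A_d$; conversely, a generic $\sigma$ satisfies $\text{mult}_{E_i}(\pi_*\tilde f^*\mathrm{div}(\sigma))=0$ for every $i$ (by base-point freeness of $\vert H\vert$), forcing $da_i\geq c_i$. The right-hand inequality $\deg_H(f)\leq(H^k)\deg^H(f)$ follows at once: the effective Weil divisor $\deg^H(f)\,H-f^*H$ has nonnegative intersection with the nef class $H^{k-1}$, so $\deg^H(f)(H^k)\geq f^*H\cdot H^{k-1}=\deg_H(f)$.

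The main obstacle is the reverse inequality $\deg^H(f)\leq k\deg_H(f)/(H^k)$, equivalently $\deg_H(f)\geq\tfrac{1}{k}\deg^H(f)(H^k)$. Fix $i^*$ with $c_{i^*}/a_{i^*}=\deg^H(f)$. The naive lower bound $\deg_H(f)\geq c_{i^*}\delta_{i^*}=\deg^H(f)\,a_{i^*}\delta_{i^*}$ is insufficient precisely when a single $a_j\delta_j$ is small compared to $(H^k)/k$, and closing this gap is where the dimensional factor $k$ must come from.

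To extract the factor~$k$, I would exploit that $f$ is an automorphism of $X$ rather than a mere birational self-map. On the resolution $\tilde Y$, $\tilde f^*H$ is a nef Cartier divisor and the birational morphism $\tilde f$ has topological degree one, giving the self-intersection identity $(\tilde f^*H)^k=(H^k)$. Combining this with a Khovanskii--Teissier-style concavity inequality applied to the pair of nef classes $\tilde f^*H$ and $\pi^*H$ on $\tilde Y$, and expanding $\tilde f^*H\cdot(\pi^*H)^{k-1}=\deg_H(f)$ as a sum of contributions weighted by the $\delta_j$, one can show that a single multiplicity $c_{i^*}$ cannot dominate without forcing compensating contributions from the other $c_j$, yielding precisely the factor $k$ matching $\dim Y$. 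The heart of the argument is translating the automorphism hypothesis (the existence of an inverse $f^{-1}$ also preserving $X$) into a symmetry constraint on the vector $(c_i)$ that rules out the extreme concentrations otherwise permitted by the naive numerics.
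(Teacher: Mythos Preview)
Your treatment of the right-hand inequality $\deg_H(f)\leq (H^k)\deg^H(f)$ is correct and matches the paper: once you know $\deg^H(f)\cdot H - f^*H$ is effective (equivalently, $F\leq \deg^H(f)H$ for the boundary-supported representative $F\sim f^*H$), intersecting with the nef class $H^{k-1}$ gives the bound. Your identification of $\deg^H(f)$ with the least $d$ for which $dH-f^*H$ is effective is also the right reformulation, and your Bertini-style argument for the converse direction is essentially the paper's.

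The left-hand inequality, however, is where your proposal has a genuine gap. You correctly isolate the difficulty: from $\deg_H(f)=\sum_i c_i\delta_i$ and $(H^k)=\sum_i a_i\delta_i$ one cannot conclude $\sum_i c_i\delta_i\geq \tfrac{1}{k}(\max_i c_i/a_i)\sum_i a_i\delta_i$ by termwise comparison. But your proposed fix does not close it. The Khovanskii--Teissier inequality applied to $\tilde f^*H$ and $\pi^*H$ only yields $\deg_H(f)^k\geq (\tilde f^*H)^k\cdot(\pi^*H)^{k(k-1)/k}=(H^k)^k$, i.e.\ $\deg_H(f)\geq (H^k)$; the quantity $\deg^H(f)$ never enters. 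Your further remarks about ``expanding as a sum weighted by the $\delta_j$'' and a ``symmetry constraint on the vector $(c_i)$'' coming from the existence of $f^{-1}$ are not made precise and do not lead anywhere obvious; in particular, the paper's argument makes no use of $f^{-1}$.

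What is actually needed is an inequality that converts intersection-theoretic data into an \emph{effectivity} statement. The paper invokes Siu's numerical inequality (Lazarsfeld, \emph{Positivity} I, Theorem~2.2.15; Cutkosky): on the resolution $Z$ with the two birational morphisms $\pi_1,\pi_2\colon Z\to Y$, it gives directly
\[
\pi_2^*H \;\leq\; \frac{k\,(\pi_2^*H\cdot(\pi_1^*H)^{k-1})}{((\pi_1^*H)^k)}\,\pi_1^*H \;=\; \frac{k\deg_H(f)}{(H^k)}\,\pi_1^*H.
\]
From $(P)+\Delta(P)H\geq 0$ one pulls back by $\pi_2$, uses this inequality to replace $\pi_2^*H$ by a multiple of $\pi_1^*H$, pushes forward by $\pi_1$, and reads off $\Delta(f^*P)\leq k(H^k)^{-1}\Delta(P)\deg_H(f)$. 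The factor $k$ is exactly the constant in Siu's criterion, not a consequence of any symmetry of the coefficient vector. Without this (or an equivalent) input, your argument for the left-hand inequality is incomplete.
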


\subsubsection{Proof of Theorem~\ref{thm:affbound}}\label{par:proof-Urech-new}
By Proposition~\ref{pro:eq-degrees},  the initial notion of degree can be replaced by $\deg^H$.
Let $\gamma$ be as in Equation~\eqref{eq:RR}. Set $\ell=(\gamma d^k)\dim A_1+1$, and assume that  $\deg^H(f^{n_i})\leq d$ 
for some  sequence  of positive integers $n_1<n_2<\ldots < n_{\ell}$. 
Each $(f^{*})^{{n_i}}$ is in $\End(A)_d$ and, because $\ell > \dim \End(A)_d$, there is a non-trivial linear relation between the
$(f^{*})^{n_i}$ in the vector space $\End(A)_d$:
\begin{equation}
(f^{*})^{n}=\sum_{m=1}^{n-1} a_m \, (f^{*})^m
\end{equation}
for some integer $n\leq n_\ell$ and some coefficients $a_m\in \bfk$. 
Then, the subalgebra $\bfk[f^*]\subseteq \End(A)$ is of finite dimension and 
 $\bfk[f^*]\subseteq E_B$ for some $B\geq 0$. 
This shows that the sequence $(\deg^H(f^N))_{N\geq 0}$ is
bounded.

Thus, if we set $\alpha=\gamma\dim A_1$, and if the sequence $(\deg^H(f^n))$ is not bounded, we obtain  
$\#\{n\geq 0|\,\, \deg^H(f^n)\leq d\}\leq \alpha d^k.
$
This proves the first assertion of the theorem; the second follows easily.

\subsection{The $p$-adic argument}  Let us  give another proof of Theorem~A when ${\mathrm{char}}(\bfk)=0$, which will be 
generalized in \S~\ref{par:BirationalTransformations}  for birational transformations.

\subsubsection{Tate diffeomorphisms}\label{par:tate-diffeomorphisms}
Let $p$ be a prime number. Let $K$ be a field of characteristic $0$ which is complete with respect to an absolute
value $\vert \cdot \vert$ satisfying $\vert p\vert = 1/p$; such an absolute value is automatically ultrametric 
(see \cite{Koblitz:book}, Ex. 2 and 3, Chap. I.2). 
Let $R=\{x\in K; \; \vert x\vert \leq 1\}$ be the valuation ring of $K$; in the vector space $K^k$, the unit {\bf{polydisk}} is  the subset~$\U=R^k$.

Fix a positive integer $k$, and consider the ring $R[\ux]=R[\ux_1, ..., \ux_k]$ of polynomial
functions in $k$ variables with coefficients in $R$. For $f$ in $R[\ux]$, define the norm
$\parallel f\parallel$ to be the supremum of the absolute values of the coefficients of $f$: 
\begin{equation}\label{eq:norm}
\parallel f \parallel=\sup_I\vert a_I\vert
\end{equation}
where 
$f=\sum_{I=(i_1, \ldots, i_k)} a_I\ux^I$.
By definition, the {\bf{Tate algebra}} $R\langle \ux\rangle$ is the completion of $R[\ux]$
with respect to this norm.
It coincides with the set of formal power series $f=\sum_I a_I \ux^I$  
converging (absolutely) on the closed polydisk $R^k$. 
Moreover, the absolute convergence is equivalent to $\vert a_I\vert \to 0$ 
as ${\mathrm{length}}(I) \to \infty$. Every element $g$ in $R\langle \ux\rangle^k$ determines a {\bf{Tate analytic}} map $g\colon\U\to \U$.

For $f$ and $g$ in $R\langle \ux\rangle$ and $c$ in $\R_+$, the notation 
$f\in p^cR\langle \ux \rangle$ 
means $\parallel f\parallel \leq \vert p\vert^c$ and the notation 
$f\equiv g \mod (p^c)$
means  $\parallel f-g\parallel \leq \vert p\vert^c$; we then extend such notations  component-wise 
to $(R\langle \ux\rangle )^m$ for all $m\geq 1$. 

For indeterminates $\ux=(\ux_1, \ldots, \ux_k)$ and $\uy=(\uy_1, \ldots, \uy_m)$, 
the composition $R\langle \uy\rangle\times R\langle \ux\rangle^m\to R\langle \ux\rangle$ is well defined, and  
coordinatewise we obtain
\begin{equation}
R\langle \uy\rangle^n\times R\langle \ux\rangle^m\to R\langle \ux\rangle^n.
\end{equation}
When $m=n=k$, we get a semigroup $R\langle \ux\rangle^k$. The group of (Tate) {\bf{analytic diffeomorphisms}}
of $\U$ is the group of invertible elements in this semigroup; we denote it by $\Diff^{an}(\U)$. Elements of $\Diff^{an}(\U)$
are bijective transformations $f  \colon     \U\to \U$ given by
$ f( \ux) = (f_1, \ldots, f_k)( \ux)$ where each $f_i$ is in $R\langle \ux\rangle$ with an inverse $f^{-1}\colon \U\to \U$ 
that is also defined by power series in the Tate algebra.  

The following result is due to Jason Bell and Bjorn Poonen (see \cite{Bell:2006, Poonen:2014}).

\begin{thm}\label{thm:Bell-Poonen}
Let $f$ be an element of $R\langle \ux \rangle^k$ with $f\equiv {\mathrm{id}}\; \mod (p^c)$
for some real number $c>1/(p-1)$. Then $f$ is a Tate diffeomorphism of $\U=R^k$ and there exists a unique Tate analytic
map $\Phi\colon R\times \U\to \U$
such that
\begin{enumerate}
\item $\Phi(n,\ux)=f^n(\ux)$ for all $n\in \Z$;
\item $\Phi(s+t,\ux)=\Phi(s, \Phi(t, \ux))$ for all $t$, $s$ in $R$.
\end{enumerate}
\end{thm}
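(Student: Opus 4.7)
The plan is to interpolate the integer iterates of $f$ by a Tate analytic function of $t \in R$ via the $p$-adic Mahler expansion; the sharp hypothesis $c > 1/(p-1)$ will appear precisely as the threshold at which factorials are dominated.

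First I would check that $f$ is a Tate diffeomorphism. Writing $f = \mathrm{id} + p^c g$ with $g \in R\langle \ux\rangle^k$, the congruence modulo $p$ says that $f$ reduces to the identity, and a Banach fixed-point argument (solve $y = \ux - p^c g(y)$ in the complete algebra $R\langle \ux\rangle^k$) produces a unique Tate inverse $f^{-1} = \mathrm{id} + p^c h$, which again satisfies $f^{-1} \equiv \mathrm{id} \mod (p^c)$. Hence each $f^n$, $n \in \Z$, is a Tate diffeomorphism of $\U$. The heart of the proof is then a bound on the difference operator $\Delta\phi := \phi \circ f - \phi$ on $R\langle \ux\rangle$. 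Using the Hasse--Schmidt Taylor expansion
\begin{equation*}
\phi(\ux + p^c g(\ux)) - \phi(\ux) = \sum_{|J|\geq 1} D^J\phi(\ux)\cdot (p^c g(\ux))^J,
\end{equation*}
in which the divided derivatives $D^J = \partial^J/J!$ preserve $R\langle \ux\rangle$, one sees that $G := p^{-c}\Delta$ is a well-defined $R$-linear endomorphism of $R\langle \ux\rangle$ of operator norm $\leq 1$, and hence $\Delta^j = p^{jc} G^j$ maps $R\langle \ux\rangle$ into $p^{jc} R\langle \ux\rangle$.

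From the operator identity $(f^*)^n = (1+\Delta)^n$ and the binomial theorem one obtains, coordinate-wise in $\ux$,
\begin{equation*}
f^n(\ux) = \sum_{j=0}^{\infty} \binom{n}{j}\,\Delta^j(\ux) \qquad (n \in \Z),
\end{equation*}
which dictates the definition
\begin{equation*}
\Phi(t, \ux) := \sum_{j=0}^{\infty} \binom{t}{j}\,\Delta^j(\ux).
\end{equation*}
For $t \in R$ we have $|t(t-1)\cdots(t-j+1)|\leq 1$ and the standard bound $v_p(j!) \leq j/(p-1)$, so each summand is bounded in norm by $|p|^{j(c-1/(p-1))}$; the hypothesis $c > 1/(p-1)$ then forces geometric convergence, and $\Phi \in R\langle t, \ux\rangle^k$ defines a Tate analytic map $R \times \U \to \U$.

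It remains to verify the three conclusions. Property (1) for $n \geq 0$ is the Mahler expansion just derived; for $n < 0$ the same argument applies to $f^{-1}$, or one uses the formal binomial identity $(1+\Delta)^n = \sum_j \binom{n}{j}\Delta^j$ directly. Property (2) holds on $\Z\times\Z \subseteq R\times R$ because $f^{s+t} = f^s\circ f^t$; since both sides of (2) are Tate analytic in $(s,t,\ux) \in R^2 \times \U$, the identity principle for Tate series (a nonzero element of $R\langle t\rangle$ has only finitely many zeros in $R$ by Weierstrass preparation, applied one variable at a time) extends the equality to all of $R^2\times \U$. Uniqueness of $\Phi$ follows from the same identity-principle argument applied to any other interpolation. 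The main obstacle is the factorization $\Delta^j = p^{jc} G^j$ with $G$ of operator norm $\leq 1$: it is this clean separation of the $p^c$-scale from a norm-one operator that allows the denominators $j!$ in the Mahler coefficients to be absorbed, and it is there that the cleanliness of $p^c$-congruences in the Tate algebra, together with the strict inequality $c > 1/(p-1)$, is essential.
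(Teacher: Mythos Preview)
The paper does not give its own proof of this statement: it is stated as a known result and attributed to Bell and Poonen with a citation to \cite{Bell:2006, Poonen:2014}, and the paper then uses it as a black box. Your proposal is essentially the argument that appears in those references, namely Mahler interpolation of the iterates via $\Phi(t,\ux)=\sum_{j\geq 0}\binom{t}{j}\Delta^j(\ux)$, with the operator bound $\|\Delta^j\|\leq p^{-jc}$ beating the denominators $j!$ exactly when $c>1/(p-1)$. So your approach matches the intended one; a couple of minor cosmetic points: writing $f=\mathrm{id}+p^c g$ and $\Delta=p^cG$ presumes $p^c\in K$, which need not hold for irrational $c$, but only the norm inequality $\|\Delta^j\phi\|\leq p^{-jc}\|\phi\|$ is actually used and your Hasse--Schmidt Taylor computation gives exactly that; and for property~(1) at negative $n$ it is cleanest to first establish (2) from its validity on $\Z_{\geq 0}\times\Z_{\geq 0}$ (already an infinite set, so the isolated-zeros principle applies) and then deduce $\Phi(-n,\cdot)=f^{-n}$ from $\Phi(-n,\Phi(n,\cdot))=\Phi(0,\cdot)=\mathrm{id}$.
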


%
%

\subsubsection{Second proof of Theorem~A}\label{par:second-proof}
Denote by $S$ the finite set of 
all the coefficients that appear in the polynomial formulas defining $f$ and $f^{-1}$. Let $R_S\subset \bfk$ be the ring 
generated by $S$ over $\Z$, and let $K_S$ be its fraction field: 
\begin{equation}
\Z\subset R_S\subset K_S\subset \bfk.
\end{equation}
Since ${\text{char}}(\bfk)=0$, there exists a prime $p>2$ such that $R_S$ embeds into $\Z_p$ (see \cite{Lech:1953}, \S 4 and 5, and \cite{Bell:2006}, Lemma 3.1). 
We apply this embedding to the coefficients of $f$ and get an automorphism of $\bbA^k_{\Q_p}$
which is defined by polynomial formulas in $\Z_p[\ux_1, \ldots, \ux_k]$; for simplicilty, 
we keep the same notation $f$ for this automorphism (embedding $R_S$ in $\Z_p$ does not change the value of the degrees $\deg(f^n)$). 
Since $f$ and $f^{-1}$ are polynomial automorphisms with coefficients in $\Z_p$, they determine elements of $\Diff^{an}(\U)$, 
the group of analytic diffeomorphisms of the polydisk $\U=\Z_p^k$. 

Reducing the coefficients of $f$ and $f^{-1}$ modulo  $p^2\Z_p$, one gets two permutations of the finite set $\bbA^k(\Z_p/p^2\Z)$ (equivalently, $f$ and $f^{-1}$ permute the balls of $\U=\Z_p^k$ of radius $p^{-2}$, and these balls are parametrized by $\bbA^k(\Z_p/p^2\Z)$; see~\cite{Cantat-Xie}).
Thus, there exists a positive integer $m$ such that $f^{m}(0)\equiv 0 \mod (p^2)$. Taking some further iterate, we may also assume that 
the differential $Df^m_0$ satisfies $Df^{m}_0\equiv \Id \mod(p)$. 
We fix such an integer $m$  and replace $f$ by $f^m$. 
The following  lemma follows from the submultiplicativity of degrees (see Equation~\eqref{eq:sub-multi} in Section~\ref{par:Intro}). It shows that replacing $f$ by $f^m$ is harmless
if one wants to bound the degrees of the iterates of $f$.

\begin{lem}\label{lem:f-to-fn}
If the sequence $\deg(f^{mn})$ is bounded for some $m>0$, then the sequence $\deg(f^n)$
is bounded too. 
\end{lem}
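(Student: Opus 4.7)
The plan is to deduce the lemma directly from the submultiplicativity of degrees stated in Equation~\eqref{eq:sub-multi}. Given any $n\geq 0$, I would perform Euclidean division by $m$, writing $n=qm+r$ with $0\leq r<m$, and then decompose the iterate as $f^n=f^{qm}\circ f^r$. Applying the submultiplicativity bound yields
\begin{equation*}
\deg(f^n)\leq c_{X,H}\,\deg(f^{qm})\,\deg(f^r).
\end{equation*}

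By hypothesis there is a constant $B$ such that $\deg(f^{qm})\leq B$ for all $q\geq 0$. Moreover, the quantity $M:=\max_{0\leq r<m}\deg(f^r)$ is the maximum of finitely many positive integers, hence finite. Combining these bounds gives $\deg(f^n)\leq c_{X,H}\,B\,M$ for every $n\geq 0$, which is precisely the desired uniform bound on the full sequence $(\deg(f^n))_{n\geq 0}$.

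There is essentially no obstacle here: the lemma is a direct consequence of the fact that, up to a bounded multiplicative error coming from $c_{X,H}$, degrees behave submultiplicatively along compositions, and the ``residues'' $f^r$ for $0\leq r<m$ only contribute finitely many values. The role of the lemma is simply to authorize the harmless reduction $f\leadsto f^m$ performed just before its statement, so that one may assume $f$ fixes the origin modulo $p^2$ and has differential congruent to the identity modulo $p$ without losing any information about the boundedness of $(\deg(f^n))$.
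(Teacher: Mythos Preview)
Your proof is correct and follows exactly the approach indicated in the paper, which simply states that the lemma follows from the submultiplicativity of degrees (Equation~\eqref{eq:sub-multi}); your Euclidean-division argument is the standard way to unpack that remark.
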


Denote by $\bfx=(\bfx_1, \ldots, \bfx_k)$ the coordinate system of $\bbA^k$, and by 
$m_p$ the multiplication by $p$: $m_p(\bfx)=p\bfx$.
Change $f$ into $g:=m_p^{-1}\circ f \circ m_p$; then $g\equiv \Id \mod(p)$ in the sense of Section~\ref{par:tate-diffeomorphisms}. 
Since $p\geq 3$, Theorem~\ref{thm:Bell-Poonen} gives 
a Tate analytic flow $\Phi\colon \Z_p\times \bbA^k(\Z_p)\to \bbA^k(\Z_p)$ which extends the action of $g$: 
$\Phi(n,\bfx)=g^n(\bfx)$ for every integer $n\in \Z$. Since $\Phi$ is analytic, one can write 
\begin{equation}
\Phi(\bft,\bfx)=\sum_J A_J(\bft) \bfx^J
\end{equation}
where $J$ runs over all multi-indices $(j_1, \ldots , j_k)\in (\Z_{\geq 0})^k$ and each $A_J$ defines a 
$p$-adic analytic curve $\Z_p\to \bbA^k(\Q_p)$. By submultiplicativity
of the degrees,  there is a constant $C>0$ such that $\deg(g^{n_i})\leq C B^m$. 
Thus, we obtain 
$A_J(n_i)= 0$
for all indices $i$ and all multi-indices $J$ of length $\vert J\vert > CB^m$. The $A_J$ being
analytic functions of $t\in \Z_p$, the principle of isolated zeros implies that 
\begin{equation}
A_J = 0 \, \; {\text{in}}\, \;  \Z_p\langle t\rangle, \; \,  \forall J \; {\text{with}} \; \vert J\vert > CB^m.
\end{equation}
Thus, $\Phi(t,\ux)$ is a polynomial automorphism of degree $\leq CB^m$ for all $t\in \Z_p$, and 
$g^n(\ux)=\Phi(n,\ux)$ has degree at most $CB^m$ for all $n$. By Lemma~\ref{lem:f-to-fn}, 
this proves that $\deg(f^n)$
is a bounded sequence.

\section{Birational transformations}~\label{par:BirationalTransformations}

\vspace{0.1cm}

\begin{thm-B}
Let $\bfk$ be a field of characteristic $0$. Let $X$ be a projective variety and  $f\colon X\dasharrow X$
be a birational transformation of $X$, both defined over~$\bfk$. If the sequence $(\deg(f^n))_{n\geq 0}$
 is not bounded, then it goes to $+\infty$ with~$n$: 
 \[
 \liminf_{n\to +\infty}\deg(f^n)=+\infty.
 \]
\end{thm-B}

\vspace{-0.5cm}

This extends Theorem~A  to birational transformations. With a theorem of Weil, we get: {\sl{if $f$ is a birational transformation of the
projective variety $X$, over an algebraically closed field of characteristic $0$, and if the degrees of its iterates 
are bounded along an infinite
subsequence $f^{n_i}$, then there exist a birational map $\psi\colon Y\dasharrow X$ and an integer $m>0$ 
 such that $f_Y:=\psi^{-1}\circ f\circ \psi$ is in $\Aut(Y)$, and $f_Y^m$ is in the connected component $\Aut(Y)^0$}} (see~\cite{Cantat:Compositio} and references therein).

Urech's argument does not apply to this context; the basic obstruction is that rational transformations of $\bbA^k_\bfk$ of degree $\leq B$ generate 
an infinite dimensional $\bfk$-vector space for every $B\geq 1$ (the maps $z\in \bbA^1_\bfk \mapsto (z-a)^{-1}$ 
with $a\in \bfk$ are linearly independent); looking back at the proof  in Section~\ref{par:proof-Urech-new}, the problem is that the field of rational functions on an affine variety $X$ is not finitely generated as a $\bfk$-algebra. We shall adapt the $p$-adic method described in Section~\ref{par:second-proof}. 
In what follows, $f$ and $X$ are as in Theorem~B; we assume, without loss of generality,  that $\bfk=\C$ and  $X$ is smooth. 
We suppose that there is an infinite sequence of integers $n_1 <  \ldots < n_j <\ldots$ and a number $B$ such that $\deg(f^{n_j})\leq B$ 
for all $j$. We fix a finite subset $S\subset \C$ such that $X$, $f$ and $f^{-1}$ are defined by equations and formulas 
with coefficients in $S$, and we embed the ring $R_S\subset \C$ generated by $S$ in some $\Z_p$, for  some prime number $p>2$. 
According to \cite[Section~3]{Cantat-Xie}, we may assume that $X$ and $f$ have good reduction modulo $p$.

\subsection{The Hrushovski's theorem and $p$-adic polydisks}

According to a theorem of Hrushovski (see~\cite{Hrushovski}), there is a periodic point $z_0$ of $f$ in $X(\bfF)$ for some finite field extension $\bfF$
of the residue field $\bfF_p$, the orbit of which does not intersect  the indeterminacy points
of $f$ and $f^{-1}$. If $\ell$ is the period of $z_0$, then $f^\ell(z_0)=z_0$ and  $Df^\ell_{z_0}$ is an element of 
the finite group $\GL((TX_{\bfF_q})_{z_0})\simeq \GL(k,\bfF_q)$. Thus, there is an integer $m>0$ such that $f^m(z_0)=z_0$
and $Df^m_{z_0}=\Id$.

Replace $f$ by its iterate $g=f^m$. Then, $g$ fixes $z_0$ in $X(\bfF)$, $g$ is an isomorphism in a
neighborhood of $z_0$, and $Dg_{z_0}=\Id$. According to~\cite{Bell-Ghioca-Tucker:2010} and \cite[Section~3]{Cantat-Xie}, this implies that there is 
\begin{itemize}
\item a finite extension $K$ of $\Q_p$, 
with  valuation ring $R\subset K$; 
\item a point $z$ in 
$X(K)$ and a polydisk $ \V_z  \simeq R^k\subset X(K)$ which is $g$-invariant and such that $g|_{V_z}\equiv \Id \mod(p)$
(in the coordinate system $(\ux_1,\ldots, \ux_k)$ of the polydisk). 
\end{itemize}
When the point $z_0$ is in $X(\bfF_p)$ and is the reduction of a point $z\in X(\Z_p)$, the polydisk $\V_z$ is the
set of points $w\in X(\Z_p)$ with $\vert z-w\vert <1$; one identifies this polydisk to $\U=(\Z_p)^k$ via some $p$-adic
analytic diffeomorphism $\varphi\colon \U\to \V_z$; changing $\varphi$ into $\varphi\circ m_p$ if necessary, we
obtain $g_{V_z}\equiv \Id \mod(p)$ (see Section~\ref{par:second-proof} and \cite{Cantat-Xie}, Section~3.2.1). In full generality, a finite
extension $K$ of $\Q_p$ is needed because $z_0$ is a point in $X(\bfF)$ for some extension $\bfF$ of $\bfF_p$. 

\subsection{Controling the degrees}
As in Section~\ref{par:tate-diffeomorphisms}, denote by $\U$ the polydisk $R^k\simeq \V_z$; thus, $\U$ 
is viewed as the polydisk $R^k$ and also as a subset of $X(K)$.
Applying Theorem~\ref{thm:Bell-Poonen} to $g$, we obtain a $p$-adic analytic flow
\begin{equation}
\Phi\colon R\times \U\to \U, \quad (t,\bfx)\mapsto \Phi(t,\bfx)
\end{equation}
such that  $\Phi(n,\bfx)=g^n(\bfx)$ for every integer $n$. 
In other words, the action of $g$ on $\U$ extends to an analytic action of the additive
compact group $(R,+)$.

Let $\pi_1\colon X\times X\to X$
denote the projection onto the first factor. Denote by $\Bir_D(X)$ the set of birational transformations of 
$X$ of degree $D$; once birational transformations are identified to their graphs,
this set becomes naturally a finite union of irreducible, locally closed algebraic subsets in the Hilbert scheme of $X\times X$
(see~\cite{Cantat:Compositio}, Section~2.2, and references therein).
Taking a subsequence, there is a positive integer $D$,  an irreducible component $B_{D}$ of $\Bir_D(X)$, 
and a strictly increasing, infinite sequence of integers $(n_j)$ such that 
\begin{equation}
g^{n_j}\in B_{D}
\end{equation}
for all $j$. Denote by $\overline{B_D}$ the Zariski closure of $B_D$ in the Hilbert scheme of $X\times X$. 
To every element $h\in \overline{B_D}$ corresponds a unique algebraic subset $\Gr_h$ of $X\times X$
(the graph of $h$, when $h$ is in $B_D$).
Our goal is to show that, for every $t\in R$,
 the graph of $\Phi(t,\cdot)$ is the intersection $\Gr_{h_t}\cap \U^2$ for some  element $h_t\in \overline{B_D}$; this
will conclude the proof because $g^n(\ux)=\Phi(n,\ux)$ for all $n\geq 0$. 

We start with a simple remark, which we encapsulate in the following lemma. 

\begin{lem}
There is a finite subset $E\subset \U\subset X(K)$ with the following property. Given any subset 
$\tilde{E}$ of $\, \U\times \U$ with $\pi_1(\tilde{E})=E$, there is at most one element $h\in \overline{B_D} $
such  that $\tilde{E}\subset \Gr_h$.
\end{lem}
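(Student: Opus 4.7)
The plan is to apply Noetherian descent inside $\overline{B_D}\times \overline{B_D}$. For each $x\in X(K)$, I would define
\[
P^{x}:=\{(h_1,h_2)\in \overline{B_D}\times \overline{B_D}\;:\;\exists\, y\in X,\;(x,y)\in \Gr_{h_1}\cap \Gr_{h_2}\},
\]
first check that $P^{x}$ is Zariski closed, then show that $\bigcap_{x\in \U}P^{x}$ is contained in the diagonal $\Delta\subset \overline{B_D}^2$, and finally extract a finite $E$ realizing this intersection by Noetherianity of the algebraic variety $\overline{B_D}^2$.

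For closedness, let $\mathcal{H}\subset \overline{B_D}\times X\times X$ denote the universal graph (whose fiber over $h$ is $\Gr_h$), and pull it back along the two projections $\overline{B_D}^2\to \overline{B_D}$ to obtain $\mathcal{H}_1,\mathcal{H}_2\subset \overline{B_D}^2\times X\times X$. Their intersection $\mathcal{I}:=\mathcal{H}_1\cap \mathcal{H}_2$ is a closed subscheme whose fiber over $(h_1,h_2)$ equals $\Gr_{h_1}\cap \Gr_{h_2}$. Because $X$ is projective, the projection $\mathcal{I}\to \overline{B_D}^2\times X$ forgetting the second $X$-factor is a closed map; its image $\mathcal{P}$ is closed in $\overline{B_D}^2\times X$, so $P^{x}=\mathcal{P}\cap(\overline{B_D}^2\times \{x\})$ is closed.

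Next, whenever $h_1\neq h_2$ in $\overline{B_D}$, the graphs $\Gr_{h_1}$ and $\Gr_{h_2}$ are distinct algebraic subsets of $X\times X$ of pure dimension $k$, and neither is contained in the other, so $\Gr_{h_1}\cap \Gr_{h_2}$ has dimension strictly less than $k$, and its image $V_{h_1,h_2}:=\pi_1(\Gr_{h_1}\cap \Gr_{h_2})$ is a proper Zariski-closed subset of $X$. The polydisk $\U\subset X(K)$ is $p$-adically open and non-empty, hence Zariski dense in $X_K$, so it cannot lie inside $V_{h_1,h_2}$; this produces some $x\in \U$ with $(h_1,h_2)\notin P^{x}$, and therefore $\bigcap_{x\in \U}P^{x}\subset \Delta$.

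Finally, since $\overline{B_D}^2$ is a Noetherian topological space, every descending chain of finite intersections of the closed sets $P^{x}$ stabilizes, so one can select a finite $E\subset \U$ with $\bigcap_{x\in E}P^{x}\subset \Delta$. This $E$ satisfies the lemma: given $h_1\neq h_2$ in $\overline{B_D}$, some $x_0\in E$ lies outside $V_{h_1,h_2}$, so no pair $(x_0,y)$ belongs to both $\Gr_{h_1}$ and $\Gr_{h_2}$; any $\tilde{E}\subset \U\times \U$ with $\pi_1(\tilde{E})=E$ must contain some $(x_0,y_0)$, which lies in at most one of the two graphs, so $\tilde{E}$ cannot simultaneously satisfy $\tilde{E}\subset \Gr_{h_1}$ and $\tilde{E}\subset \Gr_{h_2}$. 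The main step requiring justification is the geometric assertion that distinct $h_1,h_2$ in $\overline{B_D}$ yield incomparable graphs of pure dimension $k$; this rests on the description of $\overline{B_D}$ as a closure of irreducible components of the Hilbert scheme parametrizing graphs of birational transformations of degree $D$, as cited earlier in the paper.
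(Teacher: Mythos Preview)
The paper does not actually prove this lemma; it is introduced only as a ``simple remark'' and left to the reader. Your argument supplies precisely the expected justification: form the closed incidence locus $\mathcal{P}\subset\overline{B_D}^2\times X$, slice at each $x\in\U$ to get closed sets $P^x\subset\overline{B_D}^2$, use Zariski density of the $p$-adic polydisk $\U$ in $X_K$ to force $\bigcap_{x\in\U}P^x\subset\Delta$, and extract a finite $E$ by Noetherianity. This is correct and is the natural proof.

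The caveat you flag at the end is genuine. For $h_1\neq h_2$ in the boundary $\overline{B_D}\setminus B_D$, the flat limits $\Gr_{h_1},\Gr_{h_2}$ could in principle have the same support while differing only in embedded or nilpotent structure; in that case the set-theoretic intersection has dimension $k$, and both your argument and the lemma as literally stated would fail for that pair. This does not, however, affect the paper's application: what is used downstream is only that the analytic graph of $\Phi(t,\cdot)$ on $\U$ coincides with $\Gr_{h_t}\cap(\U\times\U)$ for \emph{some} $h_t\in\overline{B_D}$, and the degree bound $\deg(g^n)\leq D$ follows from existence alone. If one insists on uniqueness, one may replace each $\Gr_h$ by its underlying reduced subscheme (equivalently, work with the image of $\overline{B_D}$ under $h\mapsto(\Gr_h)_{\mathrm{red}}$); your dimension argument then goes through verbatim, since two distinct reduced equidimensional closed subsets of $X\times X$ with the same Hilbert polynomial cannot contain one another.
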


Fix such a set $E$, and order it to get a finite list $E=(x_1, \ldots, x_{\ell_0})$ of elements of $\U$.
Let 
$E'=(x_1, \ldots, x_{\ell_0}, x_{\ell_0+1}, \ldots, x_\ell)$  be any list of elements of $\U$ which extends $E$. 
 For every element $h$ in ${\overline{B_D}}$, 
the variety $\Gr_h$ determines a correspondance 
$\Gr_h\subset X\times X$. 
The subset of elements $(h,(x_i,y_i)_{1\leq i\leq \ell})$ in ${\overline{B_D}}\times (X\times X)^\ell$ 
defined by the incidence relation
\begin{equation}
(x_i,y_i)\in \Gr_h
\end{equation}
for every $1\leq i\leq \ell$ is an algebraic subset of ${\overline{B_D}}\times (X\times X)^\ell$.
Add one constraint, namely that the first projection $(x_i)_{1\leq i\leq \ell}$ coincides with $E'$, and
project the resulting subset on $(X\times X)^\ell$: we get a subset $G(E')$ of $(X\times X)^\ell$.
Then, define a $p$-adic analytic curve  $\Lambda\colon R\to (X\times X)^\ell$ by
\begin{equation}
\Lambda(t)=(x_i,\Phi(t,x_i))_{1\leq i\leq \ell}.
\end{equation}
If $t=n_j$,  $g^{n_j}$ is an element of $B_D$ and $\Lambda(n_j)$ is contained in the
graph of $g^{n_j}$; hence,
$\Lambda(n_j)$ is an element of $G(E')$.
By the principle of isolated zeros,  the analytic curve $t\mapsto \Lambda(t)\subset (X\times X)^\ell$ is contained 
in $G(E')$ for all $t\in R$. 
Thus, for every $t$ there is an element $h_t\in {\overline{B_D}}$  
such that $\Lambda(t)$ is contained in  the subset $\Gr_{h_t}^\ell$ of $(X\times X)^\ell$. From the choice of $E$ and the inclusion $E\subset E'$, 
we know that $h_t$ does not
depend on $E'$. Thus, the graph of $\Phi(t,\cdot)$ coincides with the intersection of $\Gr_{h_t}$ 
with $\U\times \U$. This implies that the graph of $g^n(\cdot)=\Phi(n,\cdot)$ coincides with $\Gr_{h_n}$, and that the degree of $g^n$
is at most $D$ for all values of $n$. 

\section{Lower bounds on degree growth}\label{par:Ackermann}

We now prove that the growth of $(\deg(f^n))$ can not be arbitrarily slow unless
 $(\deg(f^n))$ is bounded. For simplicity, we focus on birational transformations of the projective space; there is 
no restriction on the characteristic of~$\bfk$. 
 
\subsection{A family of integer sequences}\label{par:Notations} 
Fix two positive integers $k$ and $d$; $k$ will be the dimension of $\bbP^k_\bfk$, and
$d$ will be the degree of $f\colon \bbP^k\dasharrow \bbP^k$. Set
\begin{equation}
m=(d-1)(k+1).
\end{equation}
Then, consider an auxiliary integer $D\geq 1$, which will play the role of the degree of an
effective divisor in the next paragraphs, and  define 
\begin{equation}
q=(dD+1)^m.
\end{equation}
Thus, $q$ depends on $k$, $d$ and $D$ because $m$ depends on $k$ and $d$. Then, set
\begin{equation}
a_0=\left(\begin{array}{c} k+D \\ k\end{array}\right)-1, \quad b_0= 1, \quad c_0= D+1.
\end{equation}
Starting from the triple $(a_0,b_0,c_0)$, we define a sequence $((a_j, b_j, c_j))_{j\geq 0}$
inductively by
\begin{equation}
(a_{j+1}, b_{j+1}, c_{j+1})=(a_j, b_j-1, q c_j^2) 
\end{equation}
if $b_j\geq 2$, and by
\begin{equation}
(a_{j+1}, b_{j+1}, c_{j+1})=(a_j-1, qc_j^2 , q c_j^2)= (a_j-1, c_{j+1}, c_{j+1}) 
\end{equation}
if $b_j=1$. By construction,  $(a_1, b_1, c_1) = (a_0-1, q c_0^2, q c_0^2)$.

Define $\Phi\colon \Z^+ \to \Z^+$ by 
\begin{equation}
\Phi(c) = q c^2.
\end{equation}

\begin{lem}
Define the sequence of integers $(F_i)_{i\geq 1}$ recursively by $F_1=q(D+1)^2$ and 
$ F_{i+1}= \Phi^{F_i}(F_i) $
for $i\geq 1$ (where $\Phi^{F_i}$ is the $F_i$-iterate of $\Phi$). Then 
\[
(a_{1+F_1+\cdots + F_i}, \,  b_{1+F_1+\cdots + F_i}, \, c_{1+F_1+\cdots + F_i})= (a_0-i-1, \, F_{i+1}, \, F_{i+1}).
\]
\end{lem}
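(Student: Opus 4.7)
The proof is essentially an induction on $i$, with each inductive step amounting to bookkeeping that packages $F_{i+1}$ elementary steps of the recursion $(a_j,b_j,c_j)\mapsto(a_{j+1},b_{j+1},c_{j+1})$ into one ``super-step.''

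First I would handle the starting position. Since $b_0=1$, the first elementary step of the recursion is of the second type, and produces
\[
(a_1,b_1,c_1)=\bigl(a_0-1,\,\Phi(c_0),\,\Phi(c_0)\bigr)=\bigl(a_0-1,\,q(D+1)^2,\,q(D+1)^2\bigr)=(a_0-1,F_1,F_1).
\]
This handles the ``$i=0$'' case of the statement (with the empty sum equal to $0$, so that the index is $1$), and it will also serve as the base of the induction for $i=1$.

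Next, the inductive step. Suppose $(a_{J_i},b_{J_i},c_{J_i})=(a_0-i-1,F_{i+1},F_{i+1})$, where $J_i=1+F_1+\cdots+F_i$. I would follow the recursion for the next $F_{i+1}$ elementary steps. Since $b_{J_i}=F_{i+1}\geq 2$, the first elementary step falls under the case $b_j\geq 2$; it leaves $a$ unchanged, decrements $b$, and applies $\Phi$ to $c$. By an immediate induction on $s$, as long as $1\leq s\leq F_{i+1}-1$ we stay in this regime and get
\[
(a_{J_i+s},b_{J_i+s},c_{J_i+s})=\bigl(a_0-i-1,\,F_{i+1}-s,\,\Phi^{s}(F_{i+1})\bigr).
\]
Taking $s=F_{i+1}-1$ yields $b_{J_i+F_{i+1}-1}=1$, so the next elementary step falls under the case $b_j=1$: it decrements $a$ and resets $b$ and $c$ to $\Phi(c_{J_i+F_{i+1}-1})=\Phi^{F_{i+1}}(F_{i+1})=F_{i+2}$. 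Therefore
\[
(a_{J_i+F_{i+1}},b_{J_i+F_{i+1}},c_{J_i+F_{i+1}})=(a_0-i-2,\,F_{i+2},\,F_{i+2}),
\]
and since $J_i+F_{i+1}=J_{i+1}$, this is exactly the desired statement at stage $i+1$.

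There is no real obstacle here: the only thing to watch is that the switching between the two cases of the recursion happens exactly once per super-step, at the right moment. The key observation making the argument work is that $c_{j+1}=\Phi(c_j)$ in \emph{both} cases of the recursion, so the $c$-coordinate evolves as a pure iteration of $\Phi$, while the $b$-coordinate acts as a counter that triggers a decrement of $a$ and a reset of $b$ to the current value of $c$ precisely when $b=1$. Once one sees this bookkeeping pattern, the statement---and the definition of the $F_i$---is forced.
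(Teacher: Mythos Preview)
Your proof is correct and is precisely the straightforward induction the paper has in mind; the paper itself omits the argument entirely, saying only ``The proof is straightforward.'' The one small point you might make explicit is that $F_{i+1}\geq 2$ for all $i\geq 0$ (clear since $F_1=q(D+1)^2\geq 4$ and $\Phi(c)\geq c$ for $c\geq 1$), which you use to ensure the first of the $F_{i+1}$ elementary steps falls under the $b_j\geq 2$ rule.
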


The proof is straightforward. 
Now, define $S\colon \Z^+\to \Z^+$ as the sum
\begin{equation}\label{eq:S}
S(j)=1+F_1+F_2+\cdots + F_j
\end{equation}
for all $j\geq 1$; it is increasing  and goes to $+\infty$ extremely fast with~$j$. 
Then, set
\begin{equation}
\chi_{d,k}(n)=\max\left\{ D\geq 0\; \vert \; \; S(\left(\begin{array}{c} k+D \\ k\end{array}\right)-2) < n \right\}.
\end{equation}

\begin{lem}
The function $\chi_{d,k}\colon \Z^+\to \Z^+$ is  non-decreasing and  goes to $+\infty$ with $n$. 
\end{lem}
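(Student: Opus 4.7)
The plan is to unpack the definition of $\chi_{d,k}$ and verify the two claims directly from monotonicity properties of the auxiliary sequences $(F_i)$ and $S$.

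First I would establish that for each fixed $n$ the set $\{D\geq 0 : S(\binom{k+D}{k}-2)<n\}$ is a finite initial segment of $\Z_{\geq 0}$, so that the maximum makes sense. The key observation is that $F_i\geq 1$ for every $i\geq 1$ (indeed $F_1=q(D+1)^2\geq 1$ and $\Phi(c)=qc^2\geq 1$ for $c\geq 1$), hence the defining sum $S(j)=1+F_1+\cdots+F_j$ is strictly increasing in $j$ and diverges to $+\infty$. Combined with the fact that $\binom{k+D}{k}$ is strictly increasing and unbounded in $D$, this shows that $D\mapsto S(\binom{k+D}{k}-2)$ eventually exceeds any fixed $n$, so the set in the definition is bounded above and $\chi_{d,k}(n)$ is a well-defined nonnegative integer.

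Next, monotonicity is immediate: if $n\leq n'$ then every $D$ with $S(\binom{k+D}{k}-2)<n$ also satisfies $S(\binom{k+D}{k}-2)<n'$, so the set defining $\chi_{d,k}(n)$ is contained in the set defining $\chi_{d,k}(n')$, and hence $\chi_{d,k}(n)\leq \chi_{d,k}(n')$.

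Finally, to show $\chi_{d,k}(n)\to +\infty$, I would fix an arbitrary target $D_0\geq 0$ and set
\[
N(D_0):= S\!\left(\binom{k+D_0}{k}-2\right)+1.
\]
For every $n\geq N(D_0)$ the integer $D=D_0$ satisfies $S(\binom{k+D_0}{k}-2)=N(D_0)-1<n$, so $D_0$ belongs to the set whose maximum defines $\chi_{d,k}(n)$. Therefore $\chi_{d,k}(n)\geq D_0$ for all $n\geq N(D_0)$, which proves that $\chi_{d,k}(n)\to +\infty$ as $n\to +\infty$.

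The only subtle point — hardly an obstacle — is the bookkeeping at the boundary $D=0$ (where $\binom{k+D}{k}-2=-1$ and $S$ was defined only for $j\geq 1$); this is handled by the standard convention $S(j)=1$ for $j\leq 0$ (empty sum plus the leading $1$), which keeps the argument above intact and does not affect the asymptotics.
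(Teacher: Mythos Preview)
Your proposal is correct and is essentially the only reasonable way to verify the lemma; the paper itself gives no proof and treats the statement as immediate from the definitions. One small remark: you should make explicit that $S$ (and $q$, $\Phi$, the $F_i$) all depend on the parameter $D$, so that the map $D\mapsto S\bigl(\binom{k+D}{k}-2\bigr)$ involves a varying $S$; your argument still goes through because the uniform lower bound $F_i\geq 1$ gives $S(j)\geq 1+j$ regardless of $D$, hence $S\bigl(\binom{k+D}{k}-2\bigr)\geq \binom{k+D}{k}-1\to\infty$, which is what guarantees the set is bounded above.
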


\begin{rem} {\sl{The function $S$ is primitive recursive}}  (see~\cite{Davis-Weyuker}, Chapters~3 and~13). In other words, $S$ is obtained from the
basic functions (the zero function, the successor $s(x)=x+1$, and the projections $(x_i)_{1\leq i\leq m}\to x_i$)
by a finite sequence of 
compositions and recursions. Equivalently,  there is a program 
computing $S$, all of whose instructions are limited to (1) the zero initialization $V \leftarrow 0$, 
(2) the increment $V\leftarrow V+1$, (3) the assignement $V \leftarrow V'$, and (4) loops of definite
length. Writing such a program is an easy exercise. 
Now, consider the diagonal Ackermann function $A(n)$ (see~\cite{Davis-Weyuker}, Section~13.3). It grows
asymptotically faster than any primitive recursive function; hence, the inverse of the Ackermann diagonal
function 
$\alpha(n)=\max\{D\geq 0 \; \vert \; {\text{Ack}}(D)\leq n\}$ is, asymptotically, a lower bound for  $\chi_{d,k}(n)$. Showing that 
$\chi_{d,k}$ is in the ${\mathcal{L}}_6$ hierarchy of \cite{Davis-Weyuker}, Chapter~13, one gets 
an asymptotic lower bound by the inverse of the function $f_7$ of \cite{Davis-Weyuker}, independent of the values
of $d$ and $k$. 
\end{rem}

\subsection{Statement of the lower bound} 

We can now state the result that will be proved in the next paragraphs. \\

\begin{thm-C}
Let $f$ be a birational transformation of the complex projective space $\bbP^k_\bfk$ of degree $d$. If the
sequence $(\max_{0\leq j\leq n}(\deg(f^j)))_{n\geq 0}$ is unbounded, then it is bounded from below by the sequence of integers
$(\chi_{d,k}(n))_{n\geq 0}$.
\end{thm-C}

\vspace{-0.1cm}

\begin{rem}
There are infinitely, but only countably many sequences of degrees $(\deg(f^n))_{n\geq 0}$ (see~\cite{Bonifant2000, Urech}).
Consider the countably many sequences 
\begin{equation}
\left( \max_{0\leq j\leq n}(\deg(f^j))\right)_{n\geq 0}
\end{equation}
 restricted to the
family of birational maps for which $(\deg(f^n))$ is unbounded. We get a countable 
family of {\sl{non-decreasing, unbounded sequences of integers}}. 
Let $(u_i)_{i\in \Z_{\geq 0}}$ be any countable family of such sequences of integers
$(u_i(n))$. Define 
$w(n)$ as follows. First, set $v_j=\min\{u_0, u_1, \ldots, u_j\}$; this defines a new family of sequences, with 
the same limit $+\infty$, but now $v_j(n)\geq v_{j+1}(n)$ for every pair $(j,n)$. Then, set $m_0=0$, and
define $m_{n+1}$ recursively to be the first positive integer such that $v_{n+1}(m_{n+1})\geq v_n(m_n)+1$.  We have $m_{n+1}\geq m_n+1$ for all $n\in \Z_{\geq 0}.$
Set  $w(n):=v_{r_n}(m_{r_n})$ where $r_n$ is the unique non-negative integer satisfying $m_{r_n}\leq n\leq m_{r_n+1}-1$.
 By construction, 
$w(n)$ goes to $+\infty$ with $n$ and $u_i(n)$ is {\sl{asymptotically}} bounded from below by $w(n)$. 

In Theorem~C, the result is more explicit. Firstly, the lower bound is explicitely given by the sequence $(\chi_{d,k}(n))_{n\geq 0}$. Secondly, 
the lower bound is not asymptotic: it works for every value of $n$. In particular, if $\deg(f^j)< \chi_{d,k}(n)$
for $0\leq j\leq n$ and $\deg(f)=d$, then the sequence $(\deg(f^n))$ is bounded. 
\end{rem}

\subsection{Divisors and strict transforms}
To prove Theorem~C, we consider the action of $f$ by strict transform on effective divisors. 
As above, $d=\deg(f)$   and $m=(d-1)(k+1)$ (see Section~\ref{par:Notations}).

\subsubsection{Exceptional locus}
Let $X$ be a smooth
projective variety and $\pi_1$ and $\pi_2\colon X\to \bbP^k$ be two birational morphisms
such that $f=\pi_2\circ\pi_1^{-1}$; then, consider the exceptional locus $\Exc(\pi_2)\subset X$, 
project it by $\pi_1$ into $\bbP^k$, and list its irreducible components of codimension $1$: 
we obtain a finite number 
\begin{equation}
E_1,\;  \ldots, \; E_{m(f)}
\end{equation}
of irreducible hypersurfaces, contained in the zero locus of the jacobian determinant of $f$. 
Since this critical locus has degree $m$, we obtain: 
\begin{equation}
m(f)\leq m, \quad {\text{and}} \; \; \deg(E_i)\leq m\quad (\forall i\geq 1).
\end{equation}

\subsubsection{Effective divisors}

Denote by $M$ the semigroup of effective divisors of~$\bbP_\bfk^k$.
There is a partial ordering $\leq $ on $M$, which is defined by 
$E\leq E'$ if and only if the divisor $E'-E$ is effective. 

We denote by $\deg\colon M\to \Z_{\geq 0}$ the degree function.
For every degree $D\geq 0$, we denote by $M_D$ the set $\bbP(H^0(\bbP^k_\bfk, {\mathcal{O}}_{\bbP^k_\bfk}(D)))$ 
of effective divisors of degree $D$; thus, $M$ is the disjoint union of all the $M_D$, and each of these
components will be endowed with the Zariski topology of $\bbP(H^0(\bbP^k_\bfk, {\mathcal{O}}_{\bbP^k_\bfk}(D)))$. 
The dimension of $M_D$ is equal to the integer $a_0=a_0(D,k)$ from Section~\ref{par:Notations}:
\begin{equation}
\dim(M_D)= \left(\begin{array}{c} k+D \\ k\end{array}\right)-1.
\end{equation}

Let $G\subset M$ be the semigroup generated by the $E_i$: 
\begin{equation}
G=\bigoplus_{i=1}^{m(f)}\Z_{\geq 0} E_i.
\end{equation}
The elements of $G$ are the effective divisors which are supported by the exceptional locus of $f$.
For every  $E\in G$, there is a translation operator $T_E\colon M\to M$, defined
by $T_E\colon E'\mapsto E+E'$; it restricts to  a linear
projective embedding of the projective space $M_D$ into the projective space $M_{D+\deg(E)}$.
We define
\begin{equation}
M_D^\circ= M_D\setminus \bigcup_{E\in G\setminus\{0\},  \deg(E)\leq D} T_E(M_{D-\deg(E)}).
\end{equation}
Thus, $M_D^\circ $ is the complement in $M_D$ of finitely many proper linear
projective subspaces. Also, $M_0^\circ=M_0$ is a point and $M_1^\circ$ is obtained from $M_1=(\bbP^k_\bfk)^\vee$ 
by removing finitely many points, corresponding to the $E_i$ of degree~$1$ (the hyperplanes
contracted by $f$). Set 
$M^\circ =\bigcup_{D\geq 0} M_D^\circ.$ This is the set of effective divisors without any component in the exceptional locus of $f$.
The inclusion of $M^\circ$ in $M$ will be denoted by $\iota\colon M^\circ \to M$.
There is a natural projection $\pi_G\colon M\to G$; namely,  $\pi_G(E)$ is the
maximal element such that $E-\pi_G(E)$ is effective. We denote by 
$\pi_\circ\colon M\to M^\circ$ the projection $\pi_\circ = \Id-\pi_G$; this homomorphism 
removes the part of an effective divisor $E$ which is supported on the exceptional locus of $f$.

\begin{rem}
The restriction of the map $\pi_\circ$ to the projective space $M_D$
 is piecewise linear, in the following sense. Consider the subsets 
$U_{E,D}$ of $M_D$ which are defined for every $E\in G$ with $\deg(E)\leq D$ by
\[
U_{E,D}= T_E(M_{D-\deg(E)})\setminus \bigcup_{E'>E, \; E'\in G,  \; \deg(E')\leq D} T_{E'}(M_{D-\deg(E')}).
\] 
They define a stratification of $M_D$ by 
(open subsets of) linear subspaces, and $\pi_\circ$ coincides with the linear map inverse of $T_E$  on each $U_{E,D}$. 
Moreover, $\pi_{\circ}(Z)$ is closed for any closed subset $Z\subseteq M_D$.
\end{rem}

We say that a scheme theoritic point $x\in M$ (resp. $M^{\circ}$) is {\bf{irreducible}}
if the divisor of $\bbP^k$ corresponding to $x$ is irreducible. In other words, $x$ is irreducible, 
if a general closed point $y\in {\overline {\{x\}}}\subseteq M$ is irreducible.

\subsubsection{Strict transform}

First, we consider the total transform $f^*\colon M\to M$, which is defined by 
$f^*(E)=(\pi_1)_*\pi_2^*(E)$ for every divisor $E\in M$.
This  is a homomorphism of semigroups; it is injective on non-closed irreducible points.
Let $[x_0,\ldots, x_k]$ be homogeneous coordinates on $\bbP^k$. 
If $E$ is defined by the homogeneous equation $P=0$, 
then $f^*(E)$ is defined by $P\circ f=0$; thus, $f^*$ induces  a linear projective embedding of $M_D$ into $M_{dD}$ for every $D$. 
 
 Then, we denote by $f^\circ\colon M^\circ \to M^\circ$ the strict transform. It is defined by 
\begin{equation}
f^\circ(E)=(\pi_\circ\circ f^*\circ \iota)(E).
\end{equation}
This is a homomorphism of semigroups. 
If $x\in M$ is an irreducible point, its total transform $f^*(x)$ is not necessarily irreducible, but $f^{\circ}(x)$ is irreducible.

In general, $(f^\circ)^n \neq (f^n)^\circ$, but for non-closed irreducible point $x\in M$, we have $(f^\circ)^n(x) = (f^n)^\circ(x)$ for $n\geq 0$.
Indeed, a non-closed  irreducible point $x\in M$ can be viewed as an irreducible hypersurface on $X$ which is defined 
over some transcendental extension of $\bfk$, but not over $\bfk$.
Then $f^{\circ}(x)$ is the unique irreducible component $E$ of $f^*(x)$, on which $f|_E$ is birational to its image. 
(Note that when $\bfk$ is uncountable, one can also work with very general points of $M_D$ for every $D\geq 1$,
instead of irreducible, non-closed points). 

\subsection{Proof of Theorem~C}

Let $\eta$ be the generic point of $M_1^\circ $ ($\eta$ corresponds to a generic hyperplane
of $\bbP^k_\bfk$). Note that $\eta$ is non-closed and irreducible. 
The degree of $f^*(\eta)$ is equal to the degree of $f$, and since $\eta$ is
generic, $f^*(\eta)$  coincides with $f^\circ (\eta)$. Thus, $\deg(f)=\deg(f^\circ(\eta))$ and more generally
\begin{equation}\label{eq:deg-eta}
\deg(f^n)= \deg((f^\circ)^n \eta) \quad (\forall n\geq 1).
\end{equation}

Fix an integer $D\geq 0$. Write $M^\circ_{\leq D}$ for the disjoint union of the $M^\circ_{D'}$ with $D'\leq D$, and
define recursively $Z_D(0)=M^\circ_{\leq D}$ and 
\begin{equation}
Z_D(i+1)=\{E\in Z_D(i)\; \vert \; f^\circ(E)\in Z_D(i)\}
\end{equation}
for $i\geq 0$. A divisor  $E\in M^\circ_{\leq D}$ is in $Z_D(i)$ if its strict transform $f^\circ(E)$ is of degree $\leq D$, and
$f^\circ(f^\circ(E))$ is also of degree $\leq D$, up to $(f^\circ)^i(E)$ which is also of degree
at most $D$. 

Let us describe $Z_D(i+1)$ more precisely. For each $i$, and each $E\in G$ of degree $\deg(E)\leq dD$
consider the subset $T_E(\overline{\iota(Z_D(i))})\cap M_{dD}$; this is a subset of $M_{dD}$ which is 
made of divisors $W$ such that $\pi_\circ(W)$ is contained in $Z_D(i)$, and the union of all these subsets
when $E$ varies is exactly the set of points $W$ in $M_{dD}$ with a projection $\pi_\circ(W)$ in $Z_D(i)$. 
Thus, we consider 
\begin{equation}
(f^*)^{-1}(T_E(\overline{\iota(Z_D(i))}))=\{ V\in M_{\leq D}\; \vert \; f^*(V)\in  T_E(\overline{\iota(Z_D(i))}) \}.
\end{equation}
These sets are closed subsets of $M_{\leq D}$, and 
\begin{equation}
Z_D(i+1)=Z_D(i)\bigcap         \bigcup_{E\in G, \deg(E)\leq dD}   \pi_\circ\left( (f^*)^{-1}(T_E( \overline{\iota(Z_D(i))})      \right).
\end{equation}
Since $Z_D(0)$ is closed in $M_{\leq D}^{\circ}$ and $\pi_{\circ}$ is closed on $M_{\leq D}$, by induction,  $Z_D(i)$ is closed for all $i\geq 0.$ 
The subsets $Z_D(i)$ form a decreasing sequence of Zariski closed subsets (in the disjoint
union $M^\circ_{\leq D}$ of the $M^\circ_{D'}$, $D'\leq D$). The strict transform $f^\circ$ maps $Z_D(i+1)$ into $Z_D(i)$.
By Noetherianity, there exists a minimal integer $\ell(D)\geq 0$ such that 
\begin{equation}
Z_D(\ell(D))=\bigcap_{i\geq 0}Z_D(i);
\end{equation}
we denote this subset by $Z_D(\infty)=Z_D(\ell(D))$. By construction, $Z_D(\infty)$ is stable under the operator $f^\circ$; 
more precisely, $f^\circ(Z_D(\infty))=Z_D(\infty)=(f^\circ)^{-1}(Z_D(\infty))$.

Let $\tau\colon \Z_{\geq 0}\to \Z_{\geq 0}$ be a lower bound for the inverse function of $\ell$: 
\begin{equation}
\ell(\tau(n))\leq n \quad (\forall n\geq 0). 
\end{equation}
Assume that 
$\max\{\deg(f^m)\; \vert\; 0\leq m \leq n_0\}\leq \tau(n_0)$ for some $n_0\geq 1$. Then
$\deg((f^\circ)^i(\eta))\leq \tau(n_0)$ for every integer $i$ between 
$0$ and $n_0$; this implies that $\eta$ is in 
the set $Z_{\tau(n_0)}(\ell(\tau(n_0)))=Z_{\tau(n_0)}(\infty)$, so that the degree of
 $(f^\circ)^m(\eta)$ is bounded from above by $\tau(n_0)$ for all $m\geq 0$. From Equation~\eqref{eq:deg-eta}
 we deduce that the sequence $(\deg(f^m))_{m\geq 0}$ is bounded. This proves the following lemma. 

\begin{lem}
Let $\tau$ be a lower bound for the inverse function of~$\ell$.
If  
\[
\max\{\deg(f^m)\; \vert\; 0\leq m \leq n_0\}\leq \tau(n_0)
\]
for some $n_0\geq 1$, then the sequence $(\deg(f^n))_{n\geq 0}$ is bounded by $\tau(n_0)$.
\end{lem}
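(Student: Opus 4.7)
The plan is to translate the degree hypothesis into a membership statement in the filtration $(Z_D(i))_{i\geq 0}$ constructed above, and then to exploit the $f^\circ$-invariance of $Z_D(\infty)$ to propagate the degree bound to every iterate. Set $D := \tau(n_0)$, so the hypothesis reads $\max\{\deg(f^m) : 0 \leq m \leq n_0\} \leq D$.

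The first step would be to verify by induction on $i$ the characterisation
\begin{equation*}
E \in Z_D(i) \iff (f^\circ)^j(E) \in M^\circ_{\leq D} \text{ for every } j \in \{0, 1, \ldots, i\}.
\end{equation*}
The base case $i=0$ is just the definition $Z_D(0) = M^\circ_{\leq D}$, and the recursive step follows immediately from $Z_D(i+1) = \{E \in Z_D(i) : f^\circ(E) \in Z_D(i)\}$ by applying the inductive hypothesis to both $E$ and $f^\circ(E)$. Combined with the identity $\deg(f^n) = \deg((f^\circ)^n \eta)$ from Equation~\eqref{eq:deg-eta}, the hypothesis becomes the assertion that $\eta \in Z_D(n_0)$. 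The assumption that $\tau$ is a lower bound for the inverse of $\ell$ then gives $\ell(D) = \ell(\tau(n_0)) \leq n_0$; because the sequence $(Z_D(i))_i$ is decreasing and stabilises at index $i = \ell(D)$, we obtain $Z_D(n_0) \subseteq Z_D(\ell(D)) = Z_D(\infty)$, so that $\eta \in Z_D(\infty)$. The $f^\circ$-invariance $f^\circ(Z_D(\infty)) = Z_D(\infty) \subseteq M^\circ_{\leq D}$ established above then yields, by induction on $n$, that $(f^\circ)^n(\eta) \in M^\circ_{\leq D}$ for every $n \geq 0$; hence $\deg(f^n) = \deg((f^\circ)^n \eta) \leq D = \tau(n_0)$ for every $n$.

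No serious obstacle is expected, since every substantive ingredient has already been put in place: the closedness and Noetherian stabilisation of the $Z_D(i)$, and the $f^\circ$-invariance of $Z_D(\infty)$. The only point requiring a small amount of care is keeping the direction of the monotonicity straight, so that the bound $\ell(D) \leq n_0$ genuinely forces the inclusion $Z_D(n_0) \subseteq Z_D(\infty)$ (rather than the reverse), which is precisely what converts a finite-time degree bound into an infinite-time one.
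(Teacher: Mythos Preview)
Your proof is correct and follows essentially the same approach as the paper. The paper argues in the paragraph immediately preceding the lemma: from the hypothesis, $\deg((f^\circ)^i\eta)\leq \tau(n_0)$ for $0\leq i\leq n_0$, hence $\eta\in Z_{\tau(n_0)}(n_0)$; since $\ell(\tau(n_0))\leq n_0$ this set equals $Z_{\tau(n_0)}(\infty)$, and the $f^\circ$-invariance of $Z_{\tau(n_0)}(\infty)$ together with Equation~\eqref{eq:deg-eta} then bounds all $\deg(f^n)$ by $\tau(n_0)$. Your version simply makes the inductive description of $Z_D(i)$ and the monotonicity step more explicit.
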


So, to conclude, we need to compare $\ell\colon \Z_{\geq 0}\to \Z^+$ to the 
function $S\colon \Z_{\geq 0}\to \Z^+$ of paragraph~\ref{par:Notations} (recall that $S$ depends 
on the parameters $k=\dim(\bbP^k_\bfk)$ and $d=\deg(f)$ and that $\ell$ depends on $f$).
Now, write $Z_D'(i)=Z_D(i)\setminus Z_D(\infty)$, and note that it is a strictly decreasing sequence of open subsets of  $Z_D(i)$ with 
$Z_D'(j)=\emptyset$ for all $j\geq \ell(D)$. 
We shall say that a closed subset of $M^\circ_{\leq D}\setminus Z_D(\infty)$ for the Zariski topology is  {\bf{piecewise linear}} if
all its irreducible components are equal to the intersection of $M^\circ_{\leq D}\setminus Z_D(\infty)$ with a linear
projective subspace of some $M_{D'}$, $D'\leq D$.  We note that the intersection of two irreducible linear projective subspaces is still an irreducible linear projective subspace.

Let $\Lin(a,b,c)$ be the family of closed piecewise linear
subsets of $M^\circ_{\leq D}\setminus Z_D(\infty)$ of dimension $a$, with at most $c$ irreducible components, 
and at most $b$ irreducible components of maximal dimension $a$. 
Then,
\begin{enumerate}
 \item $Z_D'(i+1)=\{F \in Z_D'(i)\; \vert \; f^\circ(F)\in Z_D'(i)\}= \pi_\circ (f^*Z_D'(i)\bigcap \cup_ET_E(Z_D'(i)))$, where
$E$ runs over the elements of $G$ of degree $\deg(E)\leq dD$;
\item in this union, each irreducible component of $T_E(Z_D'(i))$ is piecewise linear. 
\end{enumerate}
Recall that $q=(dD+1)^m$ (see Section~\ref{par:Notations}). 
If $Z$ is any closed piecewise linear subset of $M^\circ_{\leq D}\setminus Z_D(\infty)$ that contains exactly 
$c$ irreducible components, the set 
\begin{equation*}
\begin{aligned}
\pi_\circ (f^*Z\bigcap \bigcup_{E\in G, \; \deg(E)\leq dD} T_E(Z))&=\bigcup_{E\in G, \; \deg(E)\leq dD} \pi_\circ (f^*Z\bigcap T_E(Z))\\
&=\bigcup_{E\in G, \; \deg(E)\leq dD} T_E^{-1}|_{T_E(Z)} (f^*Z\bigcap T_E(Z))
\end{aligned}
\end{equation*}
has at most 
$qc^2=(dD+1)^mc^2$ irreducible components (this is a crude estimate:  $f^*Z\bigcap T_E(Z)$ has at most $c^2$ irreducible components,  $T_E^{-1}|_{T_E(Z)}$ is injective
and the factor $(dD+1)^m$ comes from the fact that $G$ contains at most $(dD+1)^m$ elements of degree $\leq dD$). Let us
now use that the sequence $Z'_D(i)$ decreases strictly as $i$ varies from $0$ to $\ell(D)$, with $Z'_D(\ell(D))=\emptyset$.
If $0\leq i\leq \ell(D)-1$, 
and if $Z'_D(i)$ is contained in $\Lin(a,b,c)$, we obtain 
\begin{enumerate}
\item if $b\geq 2$, then $Z'_D(i+1)$ is contained in $\Lin(a,b-1,qc^2)$;
\item if $b=1$, then $Z'_D(i+1)$ is contained in $\Lin(a-1,qc^2,qc^2)$.
\end{enumerate}
This shows that 
\begin{equation}
\ell(D)\leq S(\left(\begin{array}{c} k+D \\ k\end{array}\right)-2)+1
\end{equation}
where $S$ is the function introduced in the Equation~\eqref{eq:S} of Section~\ref{par:Notations}. Since 
$\chi_{d,k}$ satisfies $\ell(\chi_{d,k}(n))\leq n$ for every $n\geq 1$, the conclusion follows.

%
%
\section{Appendix: Proof of Proposition~\ref{pro:eq-degrees}}
%
%

{\small{
We keep the notation from Section~\ref{par:degrees-affine}. Let $f$ be an automorphism of $X$. 
There exist a normal projective irreducible variety $Z$ and two birational morphisms $\pi_1\colon Z\to Y$ and $\pi_2\colon Z\to Y$ such that $\pi_1$ and $\pi_2$ are isomorphisms over $X$, and $f=\pi_2\circ \pi_1^{-1}.$

\begin{lem}\label{lemdpboundbydegree} We have $\Delta({f^*P})\leq k(H^k)^{-1}\Delta(P)\deg_H(f)$ for every $P\in A$.
\end{lem}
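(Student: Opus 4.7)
My plan is to split the argument into (a) a reduction to the case $P\in A_1$ using projective normality, and (b) a single-section bound proved by intersection theory on the common resolution~$Z$.

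For (a), since $H$ is projectively normal on $Y$, the multiplication map $A_1^{\otimes d}\twoheadrightarrow A_d$ is surjective, so any $P$ with $\Delta(P)=d$ can be written as a finite sum of products of $d$ elements of $A_1$. Because $f^*$ is a ring homomorphism of $A$ and $\Delta$ is subadditive on products, we obtain $\Delta(f^*P)\le d\cdot\max_{g\in A_1}\Delta(f^*g)=\Delta(P)\,\deg^H(f)$. It therefore suffices to prove $\deg^H(f)\le k(H^k)^{-1}\deg_H(f)$, i.e., $\Delta(f^*g)\le k(H^k)^{-1}\deg_H(f)$ for every $g\in A_1$.

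For (b), fix an effective representative $H^{(0)}\in|H|$ with $\mathrm{Supp}(H^{(0)})=Y\setminus X$, write $h_i=v_{D_i}(H^{(0)})$ for each irreducible component $D_i$ of $Y\setminus X$, and given $g\in A_1$ put $D_g=(g)+H^{(0)}\in|H|$. The identity $\pi_1^*(f^*g)=\pi_2^*g$ in $K(Z)$, followed by pushforward along $\pi_1$, yields the divisor equation on $Y$,
\[
(f^*g)\ =\ (\pi_1)_*\pi_2^*D_g\ -\ f^*H^{(0)},\qquad f^*H^{(0)}:=(\pi_1)_*\pi_2^*H^{(0)},
\]
with both terms on the right effective. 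Hence the pole divisor of $f^*g$ is componentwise dominated by $f^*H^{(0)}$, and therefore $\Delta(f^*g)\le\max_i\lceil v_{D_i}(f^*H^{(0)})/h_i\rceil$.

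The crux of the proof is then to establish the componentwise estimate $v_{D_i}(f^*H^{(0)})\cdot H^k\le kh_i\deg_H(f)$ for every boundary~$D_i$. My approach would work on $Z$, exploiting the projection-formula identities $(\pi_1^*H)^k=H^k$ and $\pi_2^*H\cdot(\pi_1^*H)^{k-1}=\deg_H(f)$, together with a Siu-type numerical inequality which ensures that the class $e\,\pi_1^*H-\pi_2^*H$ is big on $Z$ whenever $e>k\deg_H(f)/H^k$. The main obstacle is converting this numerical positivity into an honest pointwise divisorial bound on $Y$: Siu's inequality controls an $L^1$-type intersection average, whereas the lemma demands an $L^\infty$-type bound on individual coefficients of $f^*H^{(0)}$, and a priori a single coefficient can exceed the $H$-average. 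Bridging this gap will require either a direct multiplicity calculation identifying $v_{D_i}(f^*H^{(0)})$ with a ramification datum of $\pi_2$ along the $\pi_1$-strict transform of $D_i$, or an explicit effectiveness argument extracted from the bigness; the factor $k$ in the final bound is consistent with a dimension count entering through such an argument.
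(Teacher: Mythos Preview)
Your reduction (a) is valid but superfluous: the paper treats every $P\in A$ in one stroke, and the argument for $P\in A_1$ is literally the general argument with $\Delta(P)=1$ plugged in, so nothing is simplified by reducing first.

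In (b) you have located the right tool (Siu's inequality on the common resolution $Z$) and the right projection-formula identities, but you then recast the problem as a componentwise bound on the coefficients $v_{D_i}(f^*H^{(0)})$ on $Y$, and this reformulation is what manufactures your $L^1$--versus--$L^\infty$ obstacle. The paper avoids this detour entirely: it stays on $Z$ and invokes Siu's inequality directly in the divisorial form
\[
\pi_2^*H\ \le\ \frac{k\,(\pi_2^*H\cdot(\pi_1^*H)^{k-1})}{(\pi_1^*H)^k}\,\pi_1^*H\ =\ \frac{k\,\deg_H(f)}{(H^k)}\,\pi_1^*H,
\]
citing Lazarsfeld, \emph{Positivity~I}, Theorem~2.2.15 together with Cutkosky, Theorem~1. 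In other words, the comparison $e\,\pi_1^*H-\pi_2^*H\ge 0$ that you flag as the ``main obstacle'' is taken by the paper as the very form of Siu to be applied, not as a gap to be bridged afterward by a separate multiplicity or effectivity argument. Once this inequality is granted, the proof is three lines: pull back the effective divisor $(P)+\Delta(P)H$ by $\pi_2$ to obtain $(\pi_2^*P)+\Delta(P)\,\pi_2^*H\ge 0$ on $Z$; add $\Delta(P)$ times the $\Q$-effective divisor $e\,\pi_1^*H-\pi_2^*H$; push forward by $\pi_1$ using $(\pi_1)_*\pi_1^*={\rm Id}$ and $\pi_1^*(f^*P)=\pi_2^*P$ to get $(f^*P)+e\,\Delta(P)H\ge 0$ on $Y$. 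No coefficient-by-coefficient bookkeeping along the boundary components is needed.
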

\begin{proof}[Proof of Lemma \ref{lemdpboundbydegree}]
By Siu's inequality (see~\cite{Lazarsfeld:2004} Theorem~2.2.15, and~\cite{Cutkosky} Theorem~1), we get
\begin{equation}
\pi_2^*H\leq \frac{k(\pi_2^*H\cdot (\pi_1^*H)^{k-1})}{((\pi_1^*H)^k)}\pi_1^*H=\frac{k\deg_H(f)}{(H^k)}\pi_1^*H.
\end{equation}
Since $(P)+\Delta(P)H\geq 0$ we have $(\pi_2^*P)+\Delta(P)\pi_2^*H\geq 0.$
It follows that 
\begin{equation}
(\pi_2^*P)+\frac{\Delta(P)k\deg_H(f)}{(H^k)}\pi_1^*H\geq 0.
\end{equation}
Since $(\pi_1)_*\circ (\pi_1)^* =\Id$ we obtain $(f^*P)+(k\Delta(P)(H^k)^{-1}\deg_H(f))H\geq 0$.
This implies   $\Delta({f^*P})\leq k(H^k)^{-1}\Delta(P) \deg_H(f)$.
\end{proof}

Lemma \ref{lemdpboundbydegree} shows that $\deg^H(f)\leq k(H^k)^{-1}\deg_H(f)$.
We now prove the reverse direction: $\deg_H(f)\leq (H^k)\deg^H(f)$.

Since $H$ is very ample,  
Bertini's theorem gives an irreducible divisor $D\in |H|$ such that 
$\pi_2(E)\not\subseteq D$ for every prime divisor $E$ of  $Z$ in $Z\setminus \pi^*_2(X)$;
hence,  $\pi_2^*D$ is equal to the strict transform $\pi_2^\circ D$.
By definition, $D=(P)+H$ for some  $P\in A_1$.  
Thus, $(\pi_1)_*\pi^*_2H$ is linearly equivalent to 
$(\pi_1)_*\pi^*_2D=(\pi_1)_*\pi_2^\circ D$, and this irreducible divisor $(\pi_1)_*\pi_2^\circ D$ is the closure $D_{f^*P}$ of $\{f^*P=0\}\subseteq X$ in Y.
Writing $(f^*P)=D_{f^*P}-F$ where $F$ is supported on $Y\setminus X$ we also get that $(\pi_1)_*\pi^*_2H$ is linearly equivalent to $F$.
Since $\Delta({f^*P})\leq \deg^H(f)\Delta(P)=\deg^H(f)$, the definition of $\Delta$ gives 
\begin{equation}
D_{f^*P}-F+\deg^H(f)H=(f^*P)+\deg^H(f)H\geq 0.
\end{equation}  
Thus, $F\leq \deg^H(f)H$ because $D_{f^*P}$ is irreducible and is not supported on $Y\setminus X$.
Altogether, this gives 
$\deg_H(f)=((\pi_1)_*\pi^*_2H\cdot H^{k-1})=(F\cdot H^{k-1})\leq \deg^H(f)(H^k).
$
}}

%
%

%
%

\bibliographystyle{plain}
 
\bibliography{references}

\end{document}